\newtheorem{theorem}{Theorem}
\renewcommand{\div}{\mathop{\rm div}\nolimits}
\newcommand{\norm}[1]{\lVert#1\rVert}
\newcommand{\abs}[1]{\lvert#1\rvert}
\title{Unconditionally stable schemes for non-stationary convection-diffusion equations}
\author{Nadezhda Afanasyeva, Petr Vabishchevich, Maria Vasil'eva}
\date{}
\begin{document}

\maketitle
\begin{abstract}
Convection-diffusion problem are the base for continuum mechanics. 
The main features of these problems are associated with an indefinite operator the problem. 
In this work we construct unconditionally stable scheme for non-stationary convection-diffusion equations, 
which are based on use of new variables. Also, we consider these equations 
in the form of convection-diffusion-reaction and construct unconditionally stable schemes 
when explicit-implicit approximations are used with splitting of the reaction operator.
\end{abstract}

\section{Introduction}
Convection-diffusion equation are basic in the mathematical modelling of the problems of continuum mechanics. 
The main features of these problems are connected with the nonseldadjoint property of elliptic operator 
and domination of convective transport.
When considering compressible media, an operator of convection-diffusion problem is indefinite.
In this case, given process can be nondissipative, i.e. norm of the homogeneous problem solutions does not decrease with time.
This behavior of the norm solutions need to pass on the discrete level in choosing of approximations in time.

In the numerical solution of non-stationary problems for  convection-diffusion  equations the most widely used two- and three-layer scheme.
Investigation of the stability and convergence of approximate solutions can be performed 
using the general theory Samarskii A. A. of stability (correctness) of operator-difference schemes
\cite{Samarskii1989,SamarskiiMatusVabischevich2002}.
Must be kept in mind that for convection-diffusion problems direct application of the general stability criteria can be difficult due to non-selfadjoint operators.
Note also that, in view of indefinite operator of problem we need to oriented $\varrho$-stable ($\varrho > 1$) operator-difference schemes.
In the solution non-stationary problems of long periods of time preference should be given asymptotically 
stable schemes \cite{SamarskiiVabishchevich1995}.
For these schemes ensures the correct behavior of the solutions with the release 
of the fundamental solutions for large time and damping of others.

In this paper, we construct unconditionally stable scheme for the approximate solution of non-stationary convection-diffusion problems.
Such schemes can be applied to other problems with an indefinite operator.
The study conducted by the example of a model two-dimensional boundary-value problem in a rectangle.
Used the simplest approximation of the operators of diffusive and convective transfer on a uniform rectangular grid.
Constructed unconditionally $\varrho$-stable difference scheme based on the introduction of new variables and the explicit-implicit approximation.

\section{The convection-diffusion problem}
We consider the Neumann problem in a rectangle for the non-stationary convection-diffusion equation.
For simplicity, assume that the coefficient of diffusion transport is a constant 
(independent of time, but depends on the point of the computational domain).
The coefficient of convective transport is natural to consider the variables both in space and time.

In the rectangle
\[
\Omega = \{ \bm{x} \ | \ \bm{x} =\left(x_1, x_2\right), \quad 0 < x_\alpha <
l_\alpha, \quad \alpha = 1,2 \}.
\]
We consider the non-stationary convection-diffusion equation with the convective transport in divergent form,
\begin{equation} \label{eq:2-1-convdiff2}
\begin{split}
\frac{\partial u}{\partial t} &
+ \sum_{\alpha = 1}^{2} \frac{\partial}{\partial
x_\alpha}\left(v_\alpha\left(\bm{x}, t\right) u\right)\\ &
- \sum_{\alpha = 1}^{2} \frac{\partial}{\partial x_\alpha} \left( k(\bm{x})
\frac{\partial u}{\partial x_\alpha} \right) 
= f\left(\bm{x}, t\right), \quad \bm{x} \in \Omega, \quad 0 < t \leq T,
\end{split}
\end{equation}
in the standard assumptions $k_1  \leq k\left(\bm{x}\right) \leq k_2$, $k_1 > 0$, $T > 0$ . 
This equation is supplemented by Neumann boundary conditions
\begin{equation} \label{eq:2-1-convdiff-bc}
k\left(\bm x\right) \frac{\partial u\left(\bm{x}, t\right)}{\partial n} = 0,
\quad \bm{x} \in \partial \Omega, \quad 0 < t \leq T.
\end{equation}
For the unique solvability of the nonstationary problem the initial condition is given
\begin{equation} \label{eq:2-1-convdiff-init}
u\left(\bm{x}, 0\right) = u^0(\bm{x}), \quad \bm{x} \in \Omega.
\end{equation}

On the set of functions $u\left(\bm{x}, t\right)$,  which satisfy the boundary conditions (\ref{eq:2-1-convdiff-bc}), non-stationary convection-diffusion problem written in the form of differential-operator equation
\begin{equation} \label{eq:2-1-cd}
\frac{d u}{d t} +\mathcal{ A}u = f(t), \quad \mathcal{ A} = \mathcal{ C}(t) +
\mathcal{ D}, \quad 0 < t \leq T.
\end{equation}
The diffusion operator $\mathcal{ D}$  is defined by
\[
\mathcal{ D}u = - \sum_{\alpha = 1}^{2} \frac{\partial}{\partial x_\alpha} \left(
k\left(\bm{x}\right) \frac{\partial u}{\partial x_\alpha} \right)
\]
and convection operator $\mathcal{ C}$
\[
\mathcal{ C} u = \sum_{\alpha = 1}^{2} \frac{\partial }{\partial x_\alpha}
\left(v_\alpha\left(\bm{x}, t\right) u\right).
\]
Cauchy problem is considered for the evolution equation (\ref{eq:2-1-cd}):
\begin{equation} \label{eq:2-1-cdinit}
u\left(0\right) = u^0.
\end{equation}

For convection operator we have the following representation
\[ 
\mathcal{ C} = \mathcal{ C}_0 + \frac{1}{2} \div \bm{v} \mathcal{E},
\quad 
\mathcal{ C}_0 u = \frac{1}{2} \sum_{\alpha = 1}^{2}
\left(
v_\alpha\left(\bm{x}, t\right) \frac{\partial u}{\partial x_\alpha} + 
 \frac{\partial }{\partial x_\alpha} \left(v_\alpha(\bm{x}, t) u\right)
\right),
\]
where $\mathcal{E}$ --- the identity operator and $\mathcal{ C}_0$
-the operator of convective transport in a symmetric form.

For arbitrary functions $u(\bm{x})$ и $w(\bm{x})$, we define the Hilbert space $\mathcal{ H} = L_2\left(\Omega\right)$ 
with inner product and norm
\[
\left(u, w\right) = \int_\Omega u\left(\bm{x}\right) w\left(\bm{x}\right) dx, 
\quad
\norm{u} = \left(u, u\right)^{1/2}.
\]

Diffusion operator $\mathcal{ D}$  on the set of functions satisfying (\ref{eq:2-1-convdiff-bc}), is self-adjoint and positive define
\begin{equation} \label{eq:2-1-doperator}
\mathcal{ D} = \mathcal{ D}^*  \geq 0.
\end{equation}
The operator of convective transport is considered under the assumption that the normal component of the medium velocity $\bm{v} = (v_1,v_2)$ on the boundary is zero:
\begin{equation} \label{eq:2-1-convnepro}
v_n(x) = \bm{v} \, \bm{n} = 0, \quad x \in \partial \Omega,
\end{equation}
where $\bm{n}$ --- outward normal to $\partial \Omega$.
In $\mathcal{ H}$ the convection operators have the following properties:
\begin{equation} \label{eq:2-1-coperator}
\mathcal{ C}_0 = -\mathcal{ C}^*_0 .
\end{equation}
Also useful upper estimates for convective transport operator $\mathcal{ C}$:
\begin{equation} \label{eq:2-1-coperator2}
\abs {\left(\mathcal{ C} u, u\right)} \leq \delta \norm{u}^2, 
\quad
\delta = \frac{1}{2} \norm{\div \bm{v}}_{C\left(\Omega\right)}.
\end{equation}

\section{The differential-difference problem}
For an approximate solution of the non-stationary convection-diffusion problem we use a uniform grid in the area $\Omega$:
\[
\omega = \{ \bm{x} \ | \ \bm{x} =\left(x_1, x_2\right), \quad x_\alpha =
\left(i_\alpha + \frac{1}{2}\right) h_\alpha, 
\]
\[
\qquad i_\alpha = 0,1,...,N_\alpha,
\quad (N_\alpha + 1) h_\alpha = l_\alpha, \quad \alpha = 1,2 \} .
\]
We define the Hilbert space $H=L_2\left(\omega\right)$ for grid functions, where the inner product and norm are defined as follows:
\[
\left(y, w\right) \equiv  \sum_{\bm x \in  \omega} y\left(\bm{x}\right)
w\left(\bm{x}\right) h_1 h_2,  \quad 
\norm{y} \equiv  \left(y, y\right)^{1/2}.
\]

For the difference operator of the diffusion transfer $D$ is used additive representation
\begin{equation} \label{eq:2-1-appd}
D = \sum_{\alpha=1}^{2} D^{\left(\alpha\right)}, 
\quad \alpha = 1, 2, \quad \bm{x} \in \omega,
\end{equation}
here $D^{\left(\alpha\right)}, \ \alpha=1,2$ is associated with the corresponding differential operator in one direction.

The difference operator of diffusion transport (\ref{eq:2-1-appd}) in $H$ is self-adjoint and positive definite \cite{Samarskii1989}
\begin{equation} \label{eq:2-1-dappoperator-1}
D = D^* \geq 0.
\end{equation}

The convective terms are approximated with second-order, using the central difference derivatives and shifted grids to specify the velocity components.
For the difference operator of convective transport are also using additive representation
\begin{equation}
C y = \sum_{\alpha=1}^{2} C^{\left(\alpha\right)} .
\end{equation}

For the case of sufficiently smooth velocity components and solutions of the differential problem, for example, we can assume,
\[
  b_\alpha({\bm x}) = v_\alpha({\bm x}), 
  \quad {\bm x} \in \Omega, 
  \quad 0 <  x_\alpha < l_\alpha,
\] 
\[
  b_\alpha({\bm x}) = 0, 
  \quad x_\alpha = 0,
  \quad x_\alpha = l_\alpha,
  \quad \alpha=1,2 .
\]

Difference operator of convective transport in symmetric form have the following basic property:
\begin{equation} \label{eq:2-1-cappoperator-1}
C_0^* = -C_0.
\end{equation}
We also \cite{SamarskiiVabishchevich1995} have the grid analogue of inequality (\ref{eq:2-1-coperator2}):
\begin{equation}\label{eq:2-1-cappoperator-2}
\abs{\left(C y, y\right)} \leq \delta \norm{y}^2
\end{equation}
with a constant
\[
\delta = \frac{1}{2} \max_{\bm x \in \omega} \left| \frac{
b^{(1)}\left(x_1+0.5h_1,x_2\right) - b^{(1)}\left(x_1-0.5h_1,x_2\right)  }{h_1} 
\right.+
\]\[ \left.\qquad + \frac{ b^{(2)}\left(x_1, x_2+0.5h_2\right) -
b^{(2)}\left(x_1, x_2-0.5h_2\right) }{h_2} \right| .
\]

Therefore, from the equation (\ref{eq:2-1-cd}) we arrive at the differential-operator equation
\begin{equation} \label{eq:2-1-gridcd}
\frac{d y}{dt} + A y = \phi\left(t\right), \quad A = A\left(t\right) = C + D,
\quad 0 < t \leq T,
\end{equation}
on the set of grid functions $y(t) \in H$ with the initial condition
\begin{equation} \label{eq:2-1-gridinit}
y\left(0\right) = y^0.
\end{equation}
Difference  convection and diffusion operators in the differential-difference problem inherit the basic properties of differential operators.

\section{Unconditionally stable schemes}
For simplicity, we restrict ourselves to a uniform grid in time
\[
  \bar\omega_\tau =
  \omega_\tau\cup \{T\} =
  \{t^n=n\tau,
  \quad n=0,1, ... ,N,
  \quad \tau N =T\} .
\]
For an approximate solution of (\ref{eq:2-1-gridcd}), (\ref{eq:2-1-gridinit}) commonly used two-layer weighted scheme, 
which have a following restrictions on the time step $\tau < \tau_0 = \frac {1}{\sigma \delta}$.

To construct the unconditionally stable schemes for the solution 
of the differential problem (\ref{eq:2-1-gridcd}), (\ref{eq:2-1-gridinit}) 
with $A \geq - \delta E, \ \delta > 0$ we define a new function  $w$:
\begin{equation} \label{41}
y = \exp(\delta t) w.
\end{equation}
Substitution of (\ref{41}) in (\ref{eq:2-1-gridcd}), (\ref{eq:2-1-gridinit}) 
with homogeneous right-hand side gives the following problem for the $w$:
\begin{equation} \label{42}
\frac{d w}{dt} + \tilde{A} w = 0, \quad \tilde{A} = A + \delta E, 
\quad 0 < t \leq T,
\end{equation}
\begin{equation} \label{43}
w\left(0\right) = y^0.
\end{equation}
Under this transformation, operator $\tilde{A}$ is a nonnegative ($\tilde{A} \geq 0$).

To solve the problem (\ref{42}), (\ref{43}) we use a two-layer weighted difference scheme, 
which is unconditionally stable for standard restrictions $\sigma \geq 0.5$.
We write the scheme for the grid function $y^n$
\begin{equation} \label{eq:newscheme-new}
\frac{\exp(-\delta \tau) y^{n+1} - y^n}{\tau} + \left( A + \delta E\right) \left(
\sigma \exp(-\delta \tau) y^{n+1} + \left(1 - \sigma\right) y^n \right) = 0,
\end{equation}
\begin{equation} \label{eq:newscheme-newinit}
y^0 = u^0, \quad
t^n \in \omega_\tau,.
\end{equation}
In contrast to the non-standard schemes considered in \cite{Mickens2002}, the positive effect is achieved not only through the use of a new approximation of the time, but also by correcting the problem operator.

\begin{theorem}
The difference scheme (\ref{eq:newscheme-new}), (\ref{eq:newscheme-newinit}) with  $\sigma \geq 0.5$  unconditionally $\varrho$-stable in $H$ with
\begin{equation} \label{eq:2-1-ns-3}
\varrho = \exp(\delta \tau) ,
\end{equation}
with the a priori estimate for solutions
\begin{equation} \label{eq:2-1-ns-2}
\norm{y^{n+1}} \leq \varrho \norm{y^n} .
\end{equation}
\end{theorem}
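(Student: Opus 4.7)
The plan is to reduce (\ref{eq:newscheme-new}) to the classical two-layer weighted scheme for the auxiliary function $w^n$ defined through (\ref{41}), and then apply the standard $H$-energy argument with the corrected operator $\tilde{A} = A + \delta E$. Writing $y^n = \exp(\delta t^n) w^n$, so that $\exp(-\delta\tau)\,y^{n+1} = \exp(\delta t^n) w^{n+1}$, I would substitute into (\ref{eq:newscheme-new}) and divide by $\exp(\delta t^n)$; this turns the non-standard scheme into the discrete analogue of (\ref{42}),
\[
\frac{w^{n+1} - w^n}{\tau} + \tilde{A}\bigl(\sigma w^{n+1} + (1-\sigma)w^n\bigr) = 0.
\]

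Next I would verify that $(\tilde{A} y, y) \geq 0$ on $H$: from (\ref{eq:2-1-dappoperator-1}) we have $(Dy, y) \geq 0$, while (\ref{eq:2-1-cappoperator-2}) gives $(Cy, y) \geq -\delta\norm{y}^2$, so $(\tilde{A}y, y) = (Ay,y) + \delta\norm{y}^2 \geq 0$. Taking the $H$-inner product of the reduced scheme with $w_\sigma = \sigma w^{n+1} + (1-\sigma) w^n$ and using the algebraic identity
\[
\bigl(w^{n+1} - w^n,\, w_\sigma\bigr) = \tfrac12\bigl(\norm{w^{n+1}}^2 - \norm{w^n}^2\bigr) + \bigl(\sigma - \tfrac12\bigr)\norm{w^{n+1}-w^n}^2
\]
yields the energy identity
\[
\frac{\norm{w^{n+1}}^2 - \norm{w^n}^2}{2\tau} + \frac{2\sigma - 1}{2\tau}\,\norm{w^{n+1}-w^n}^2 + (\tilde{A} w_\sigma, w_\sigma) = 0 .
\]
For $\sigma \geq 1/2$ the last two contributions are nonnegative, hence $\norm{w^{n+1}} \leq \norm{w^n}$. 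Undoing the substitution gives $\norm{y^{n+1}} = \exp(\delta t^{n+1})\norm{w^{n+1}} \leq \exp(\delta\tau)\exp(\delta t^n)\norm{w^n} = \varrho\,\norm{y^n}$, which is (\ref{eq:2-1-ns-2}) with $\varrho = \exp(\delta\tau)$.

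The main obstacle is conceptual rather than computational: the operator $A$ is non-selfadjoint because $C$ contains the skew-symmetric part $C_0$, so standard $A$-energy-norm arguments are not available. The exponential scaling $\exp(-\delta\tau)$ built into (\ref{eq:newscheme-new}) is designed precisely so that the substitution (\ref{41}) produces the plain weighted scheme for $\tilde{A}$, and the energy estimate above uses only the quadratic form $(\tilde{A}\cdot,\cdot) \geq 0$, for which self-adjointness of $\tilde{A}$ is never needed. The one step to verify carefully is that the exponential substitution transforms (\ref{eq:newscheme-new}) cleanly into the standard weighted scheme for $\tilde{A}$ without leftover factors; after that, everything reduces to the classical Samarskii stability estimate for $\sigma \geq 1/2$.
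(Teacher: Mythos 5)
Your proof is correct and follows essentially the same route as the paper: the paper applies the identical Samarskii energy identity (multiplication by the weighted average, nonnegativity of $\tilde{A}=A+\delta E$ from $(Cy,y)\geq-\delta\norm{y}^2$ and $D\geq 0$) directly to the variables $\exp(-\delta\tau)y^{n+1}$, $y^n$, which is exactly your argument after undoing the substitution $y^n=\exp(\delta t^n)w^n$. Making the change of variables explicit is only a cosmetic difference, since the scheme itself was constructed from the weighted scheme for $w$.
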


\begin{proof}
We rewrite the scheme (\ref{eq:newscheme-new}),
(\ref{eq:newscheme-newinit}) in form
\begin{equation} \label{eq:2-1-ns-1}
\frac{\exp(-\delta \tau) y^{n+1} - y^n}{\tau} + \tilde{A} p^{n+1}= 0, \quad t^n \in
\omega_\tau,
\end{equation}
where
\[
p^{n+1} = \sigma \exp(-\delta \tau) y^{n+1} + \left(1 - \sigma\right) y^n =
  \tau \left( \sigma - \frac{1}{2} \right) r^{n+1} +
  \frac{1}{2} \left (\exp(-\delta \tau) y^{n+1} - y^n \right ),
\]
\[
r^{n+1} = \frac{\exp(-\delta \tau) y^{n+1} - y^n}{\tau}.
\]
Multiplying the scalar equation (\ref{eq:2-1-ns-1}) by $p^{n+1}$, we obtain the equality
\[
\tau \left( \sigma - \frac{1}{2} \right) \left(r^{n+1}, r^{n+1} \right) + \tilde{A}
\left(p^{n+1}, p^{n+1}\right)
\]
\[
+ \frac{1}{2 \tau}\left( \left(\exp(-\delta \tau)
y^{n+1}, \exp(-\delta \tau) y^{n+1}\right) - \left(y^n, y^n\right) \right) = 0
\]
From this equation, under the condition $\sigma \geq 0.5$ and $\tilde{A} \geq 0$, yields the estimate of stability (\ref{eq:2-1-ns-2}),(\ref{eq:2-1-ns-3}).
\end{proof}

Equation (\ref{eq:2-1-convdiff2}) can be written in the form 
of convection-diffusion-reaction equation with the convective terms in the symmetric form
\begin{equation}\label{eq:cdr}
\begin{split}
   \frac{\partial u}{\partial t} & + {C}_0 u + {D} u + {R} u = f({\bm x},t),
   \quad {\bm x}\in \Omega,
   \quad t > 0.
\end{split}
\end{equation}
where
\[
  { R} u = r({\bm x},t) y , \quad r({\bm x},t)  = \frac{1}{2} \div {\bm v} .
\] 
For the reaction operator we have the estimate
\begin{equation}\label{eq:r}
  { R} = { R}^* ,
  \quad -\delta  {\cal E} \leq { R} \leq \delta { E}.
\end{equation}

To construct the unconditionally stable scheme without the assumption of nonnegativity operator of problem we will use the explicit-implicit approximation for the equation (\ref{eq:cdr}). 
The problem is generated by the reaction operator therefore we split it into two:
\begin{equation}\label{3-1.37}
  R = R_+ + R_-,
  \quad R_+ = R_+^*,
  \quad R_- = R_-^*,
  \quad 0 \leq R_+ \leq \delta E,
  \quad -\delta E \leq R_- < 0 .   
\end{equation} 

When using the two-layer explicit-implicit schemes, we can only count on first-order accuracy in time.
Therefore it is natural oriented to purely implicit approximation of the basic terms of the operator 
and define following difference scheme
\begin{equation}\label{eq:cdrsh}
  \frac {y^{n+1} - y^n}{\tau} +
  (C^n + D + R_+^n) y^{n+1} + R_-^n y^n = 0,
  \quad n = 0,1,...,N-1 .
\end{equation}

\begin{theorem}\label{3-1.t-4}
Explicit-implicit difference scheme  (\ref{eq:cdrsh}), (\ref{eq:newscheme-newinit})  unconditionally $\varrho$-stable in $H$ with
\begin{equation}\label{3-1.39}
  \varrho = 1 + \delta \tau 
\end{equation} 
for the numerical solution we have the estimate:
\begin{equation}\label{3-1.40}
  \|y^{n+1}\| \le \varrho \|y^n\|,
  \quad n = 0,1,...,N-1 .  
\end{equation} 
\end{theorem}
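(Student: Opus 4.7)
The plan is to mimic the standard energy estimate for two-layer schemes, exploiting the operator decomposition that was carefully set up before the statement: the symmetric convective part is skew-adjoint, $D$ and $R_+^n$ are nonnegative self-adjoint and are taken implicitly, while the problematic indefinite piece $R_-^n$ sits on the old time layer and thus appears only on the right-hand side.

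First I would rewrite the scheme in the form
\[
  (E + \tau(C^n + D + R_+^n))\,y^{n+1} = (E - \tau R_-^n)\,y^n,
\]
noting that solvability is immediate because the symmetric part of the operator on the left is $E + \tau D + \tau R_+^n \ge E$, so the operator is coercive and hence invertible. Then I would take the inner product of the scheme (\ref{eq:cdrsh}) with $y^{n+1}$, arriving at
\[
  \|y^{n+1}\|^2 - (y^n, y^{n+1}) + \tau\bigl((C^n + D + R_+^n)y^{n+1}, y^{n+1}\bigr) = -\tau(R_-^n y^n, y^{n+1}).
\]

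Next I would discard the implicit bilinear form on the left: the skew-adjointness (\ref{eq:2-1-cappoperator-1}) gives $(C_0^n y^{n+1}, y^{n+1}) = 0$ (interpreting the $C^n$ in (\ref{eq:cdrsh}) as the symmetric-form convective operator consistent with (\ref{eq:cdr})), while $D \ge 0$ by (\ref{eq:2-1-dappoperator-1}) and $R_+^n \ge 0$ by (\ref{3-1.37}). Hence
\[
  \|y^{n+1}\|^2 \le \bigl((E - \tau R_-^n)y^n,\, y^{n+1}\bigr),
\]
and Cauchy--Schwarz yields $\|y^{n+1}\| \le \|(E - \tau R_-^n)y^n\|$.

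Finally, I would bound the operator norm of $E - \tau R_-^n$. Since $R_-^n$ is self-adjoint with $-\delta E \le R_-^n \le 0$ by (\ref{3-1.37}), the operator $E - \tau R_-^n$ is self-adjoint and satisfies $E \le E - \tau R_-^n \le (1 + \delta\tau)E$, so its operator norm is at most $1 + \delta\tau = \varrho$. Combining with the previous inequality delivers (\ref{3-1.40}). The only delicate point, and the step I would double-check most carefully, is the sign bookkeeping for $R_-^n$: it is negative, so $-\tau R_-^n \ge 0$ \emph{adds} to the identity, and it is exactly the a priori two-sided bound in (\ref{eq:r})--(\ref{3-1.37}) that converts this contribution into the clean factor $1 + \delta\tau$.
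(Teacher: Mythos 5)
Your proof is correct. Note that the paper states this theorem without giving a proof at all (unlike Theorem 1, which is proved by testing the scheme with the weighted element $p^{n+1}$), so your argument fills in the omitted details rather than duplicating a printed one; it is the natural energy estimate and is consistent in spirit with the paper's proof of Theorem~1, but simpler, since the scheme is purely implicit in the main operator and testing with $y^{n+1}$ suffices. The key steps all check out: rewriting the scheme as $(E+\tau(C^n+D+R_+^n))y^{n+1}=(E-\tau R_-^n)y^n$, dropping the implicit bilinear form because $(C_0^n y^{n+1},y^{n+1})=0$ by (\ref{eq:2-1-cappoperator-1}) and $D\ge 0$, $R_+^n\ge 0$, then Cauchy--Schwarz and the self-adjoint two-sided bound $E\le E-\tau R_-^n\le(1+\delta\tau)E$ giving the factor $\varrho=1+\delta\tau$. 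Your explicit remark that the $C^n$ in (\ref{eq:cdrsh}) must be read as the symmetric-form convective operator $C_0^n$ of (\ref{eq:cdr}) is exactly the right reading (otherwise the reaction part would be counted twice and the skew-symmetry argument would fail), and your solvability observation via the coercive symmetric part $E+\tau D+\tau R_+^n\ge E$ is a useful addition that the paper only hints at through (\ref{3-1.slau}).
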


It is important to note that, in contrast to the ordinary weighted scheme, stability is obtained without restrictions on the time step.
The transition to a new time layer associated with the solution of the grid problem
\begin{equation}\label{eq:newscheme-slae}
  (E +  \sigma \tau (A + \delta E ))  y^{n+1} =  \chi^n
\end{equation}
for scheme (\ref{eq:newscheme-new}) and 
\begin{equation} \label{3-1.slau}
 (E + \tau (C + D + R_+)) y^{n+1} = r^n
\end{equation}
for scheme (\ref{eq:cdrsh}).

Equation (\ref{eq:newscheme-slae}) and (\ref{3-1.slau})  is a system of linear algebraic equations with a positive definite nonselfadjoint matrix.
For solutions that can apply the standard iterative methods.

\end{document}